\newtheorem{theorem}{Theorem}[section]
\newtheorem{proposition}[theorem]{Proposition}
\newtheorem{lemma}[theorem]{Lemma}
\newtheorem{definition}[theorem]{Definition}
\newtheorem{corollary}[theorem]{Corollary}
\newtheorem{question}[theorem]{Question}
\newtheorem{problem}[theorem]{Problem}
\begin{document} 
\begin{center}
{\Large \bf Polynomial Rings Over Commutative Reduced Hopfian Local Rings }\\
\vspace{.1in} {\large Alpesh M. Dhorajia and Himadri Mukherjee }\\
\vspace{.05in} {Birla Institute of Technology and Science Pilani, India\\
$\{$\mbox{alpesh, himadrim}$\}$@goa.bits-pilani.ac.in}
\end{center}
{\small
\noindent{\bf Abstract:} In this paper we prove that if $R$ is a commutative, reduced, local ring, then $R$ is 
Hopfian if and only if the ring $R[x]$ is Hopfian. This answers a question of Varadarajan \ref{p}, in the case when $R$ is a
reduced local ring. We provide examples of non-Noetherian Hopfian commutative domains by proving that the 
finite dimensional domains are Hopfian. Also, we derive some general results related to Hopfian rings. 

\vspace*{.1in}
\noindent{\bf AMS subject classification: 13A99; 13B25; 54C35.}{} \vspace{.1in}

\noindent {\bf Key words:}{ Hopfian Rings; Clean Rings; Local Rings.} 
\section{Introduction}
All the rings considered in this paper are commutative with identity. A $Hopfian$ object is an object $A$ such that 
any epimorphism of $A$ onto $A$ is necessarily an automorphism. 
The dual notion is that of a $co$-$Hopfian$ object, which is an object $B$ such that every monomorphism from 
$B$ into $B$ is necessarily an automorphism. The notion of Hopfian groups was introduces by Baumslag in \cite{Ba}, the notion of Hopfian group.
Hiremath, introduced the concept of Hopfian rings and Hopfian modules and in \cite{Hiremath}, 
he proved the following: Let $R$ be a boolean ring and $X$ be a space of all 
the maximal ideals of $R$ equipped with hull-kernel topology, if $R$ is a Hopfian then $X$ is a co-Hopfian, in the sense that 
every injective continuous map from $X$ into $X$ is homeomorphism.
The notion of Hopfian and co-Hopfian have been studied in the categories of 
groups, rings, modules and topological spaces. Since then the question has been generalized not only
to other categories but also has been weakened and strengthened in quest of finding a classification 
by many researchers (\cite{ASG}, \cite{Gho}, \cite{Hag}, \cite{Strong_Hopfian}, \cite{Hopfian_module}, \cite{Note}). 

In \cite{V1}, Varadarajan has studies quite intensively the Hopfian and co-Hopfian objects and he asked several interesting
questions which are still open. He asked if the analogue of Hilbert's basis theorem is valid for Hopficity of 
polynomial ring. More precisely, he asked the following:
\begin{problem}\label{p}
Let $R$ be a commutative ring, whether 
$R$ Hopfian, implies the polynomial ring $R[x]$ Hopfian?
\end{problem}
In \cite{Varadrajan}, Varadarajan
answered the above question positively in the case if $R$ is a boolean ring. More precisely, he proved that 
if $R$ is a boolean Hopfian ring then the polynomial ring $R[x]$ is also Hopfian. In this paper, 
we extend Varadarajan's result by proving: If $R$ is a commutative reduced clean ring, then 
$R$ is Hopfian implies that the polynomial ring $R[x]$ is also Hopfian.
In \cite{Tripathi}, Tripathi shows that 
if $R$ is a ring such that $a^n=a$ for all $a\in R$, then $R$ is Hopfian if and only if $R[x]$ is Hopfian, where $n$ is a
some fixed positive integer $> 1$. Further, in \cite{Note}, Tripathi and Zvengrowski generalized the above result to the ring 
$R$ in which every element $a$ satisfies $a^{n_a}=a$ for some positive integer $n_a>1$ depending on $a$. They have also shown 
that there are infinite class of examples of Hopfian rings that are non Noetherian rings. We explore some more examples of 
commutative non-Noetherian Hopfian domains by proving 
if $R$ is a commutative domain of finite dimensional then $R$ is Hopfian.

A commutative ring $R$ is said to be $clean$ 
if every element of $R$ can be written as a sum of a unit and an idempotent. The notion of a clean ring was 
introduced by Nicholson (\cite{NK}) and has been studied extensively since then by many 
researchers (see \cite{And}, \cite{Camillo}, \cite{cam}, \cite{NK1}). We prove that if $R$ is a commutative reduced clean ring,
then $R$ Hopfian implies $R[x]$ Hopfian. As a corollary we derive that for a reduced commutative local ring $R$, $R$ is Hopfian if and only if
$R[x]$ is Hopfian. It is well known that the boolean ring is a clean. 

In section $2$, we show that if $R$ a finite dimensional integral domain with unity
then it is Hopfian. As an application, we get an example of a Hopfian commutative non-Noetherian integral domain. 
This example settles query of Varadarajan (see \cite{Varadrajan}) and will extend the known classes of Hopfian rings. In section
$3$, we prove the results regarding the $R$ Hopfian vs $R[x]$ Hopfian question, which
generalize the results of \cite{Hiremath, Note, Varadrajan}. More precisely, we prove that if $R$ is a commutative clean 
Hopfian ring, then the polynomial ring $R[x]$ is also Hopfian (see \ref{theorem}).

\section{Hopficity of a General Ring}
Throughout this paper we assume that our ring $R$ is commutative and contains a non-zero identity. We prove the following observation which is crucial ingredient for the main result of this section. 
\begin{proposition}\label{p1}
Let $R$ be a commutative domain and $\phi : R\rightarrow R$ is an onto ring homomorphism. 
Let $\mathfrak{A}_0=ker \phi$ and $\mathfrak{A}_n=\phi^{-1}(\mathfrak{A}_{n-1})$ for $n\geq 1$. Then

$(1)$ $\mathfrak{A}_0\subseteq\mathfrak{A}_1\subseteq\ldots \subseteq \mathfrak{A}_n\subseteq\ldots$.

$(2)$ If $\mathfrak{A}_n=\mathfrak{A}_{n+1}$ for some $n$, then $\mathfrak{A}_0=0$.
\end{proposition}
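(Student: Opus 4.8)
The plan is to prove both parts by unwinding the definition of $\mathfrak{A}_n$ and using that $\phi$ is surjective with $\phi(1)=1$. For $(1)$, I would argue by induction. The base case $\mathfrak{A}_0\subseteq\mathfrak{A}_1$ amounts to: if $a\in\ker\phi$, then $\phi(a)=0\in\mathfrak{A}_0$ (since $\mathfrak{A}_0$ is an ideal and so contains $0$), hence $a\in\phi^{-1}(\mathfrak{A}_0)=\mathfrak{A}_1$. For the inductive step, assuming $\mathfrak{A}_{n-1}\subseteq\mathfrak{A}_n$, apply $\phi^{-1}$ to both sides; since preimages preserve inclusions, $\mathfrak{A}_n=\phi^{-1}(\mathfrak{A}_{n-1})\subseteq\phi^{-1}(\mathfrak{A}_n)=\mathfrak{A}_{n+1}$. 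This part is routine.

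For $(2)$, the key first observation is that the stabilization propagates downward: if $\mathfrak{A}_n=\mathfrak{A}_{n+1}$, then applying $\phi^{-1}$ gives $\mathfrak{A}_{n+1}=\phi^{-1}(\mathfrak{A}_n)=\phi^{-1}(\mathfrak{A}_{n+1})=\mathfrak{A}_{n+2}$, and inductively $\mathfrak{A}_n=\mathfrak{A}_{n+k}$ for all $k\geq 0$. More usefully, I want to push the equality the other way using surjectivity: since $\phi$ is onto, $\phi(\mathfrak{A}_{k+1})=\phi(\phi^{-1}(\mathfrak{A}_k))=\mathfrak{A}_k$ for every $k\geq 0$. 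So from $\mathfrak{A}_n=\mathfrak{A}_{n+1}$ I would apply $\phi$ repeatedly: $\mathfrak{A}_{n-1}=\phi(\mathfrak{A}_n)=\phi(\mathfrak{A}_{n+1})=\mathfrak{A}_n$, and descending, $\mathfrak{A}_0=\mathfrak{A}_1=\cdots=\mathfrak{A}_n$. In particular $\mathfrak{A}_0=\mathfrak{A}_1=\phi^{-1}(\mathfrak{A}_0)$.

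Now comes the part where the domain hypothesis enters. We have $\mathfrak{A}_0=\phi^{-1}(\mathfrak{A}_0)$, i.e. $\phi(\mathfrak{A}_0)\subseteq\mathfrak{A}_0$ and more precisely $a\in\mathfrak{A}_0\iff\phi(a)\in\mathfrak{A}_0$. Pick any $a\in\mathfrak{A}_0=\ker\phi$. Then $\phi(a)=0$, so $a\in\phi^{-1}(\{0\})\subseteq\phi^{-1}(\mathfrak{A}_0)=\mathfrak{A}_0$ — that only recovers what we knew. Instead I would look at an element $b$ with $\phi(b)=a$ (possible by surjectivity); then $\phi(b)=a\in\mathfrak{A}_0$ forces $b\in\phi^{-1}(\mathfrak{A}_0)=\mathfrak{A}_0=\mathfrak{A}_1$, and iterating, every element of $\mathfrak{A}_0$ has a preimage in $\mathfrak{A}_0$ under every power $\phi^m$. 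Equivalently $\mathfrak{A}_0\subseteq\phi^m(\mathfrak{A}_0)$ for all $m$; combined with $\phi(\mathfrak{A}_0)\subseteq\mathfrak{A}_0$ this gives $\phi(\mathfrak{A}_0)=\mathfrak{A}_0$, so $\phi$ restricts to a surjection (in fact a bijection, since it is already injective on preimage-chains here) of the ideal $\mathfrak{A}_0$ onto itself. The contradiction I aim for: if $\mathfrak{A}_0\neq 0$, take $0\neq a\in\mathfrak{A}_0$; then $a$ annihilates... hmm, this is where I expect the genuine difficulty, and the cleanest route is likely to exhibit a specific relation. Since $\phi(b)=a$ with $b\in\mathfrak{A}_0$, we get $\phi(ab)=\phi(a)\phi(b)=0\cdot a=0$, so $ab\in\mathfrak{A}_0$; but also consider iterating to produce $b_0=a,b_1,b_2,\dots$ with $\phi(b_{i+1})=b_i$ all lying in $\mathfrak{A}_0$, and examine the product $b_0b_1$: applying $\phi$ gives $\phi(b_0b_1)=\phi(b_0)b_0=0$ since $b_0\in\ker\phi$. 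The main obstacle will be converting the self-surjectivity of $\mathfrak{A}_0$ into an actual contradiction with $\mathfrak{A}_0\neq0$; I expect the argument to hinge on choosing $0\neq a\in\mathfrak{A}_0$, writing $a=\phi(b)$ with $b\in\mathfrak{A}_0$ so $ab=\phi(b)\cdot b$, noting $\phi(b^2)=a\phi(b)$ is not obviously zero — so instead one should track that $a\in\ker\phi$ while simultaneously $a\in\mathrm{im}(\phi|_{\mathfrak{A}_0})$ repeatedly, and use that in a domain a nonzero element is a non-zero-divisor to cancel and force $a=0$. I would flesh this final cancellation step out carefully, as it is the crux.
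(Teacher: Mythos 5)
Part (1) of your proposal and the descent step in part (2) are correct and agree with the paper: surjectivity gives $\phi(\phi^{-1}(S))=S$ for any subset $S$, so $\mathfrak{A}_n=\mathfrak{A}_{n+1}$ pushes down to $\mathfrak{A}_{n-1}=\mathfrak{A}_n$ and eventually to $\mathfrak{A}_0=\mathfrak{A}_1=\phi^{-1}(\mathfrak{A}_0)$. However, your argument is not complete: you explicitly defer the final step (``converting the self-surjectivity of $\mathfrak{A}_0$ into an actual contradiction with $\mathfrak{A}_0\neq 0$'') as a crux still to be fleshed out, and the route you sketch for it --- forming products $ab$ and $b_0b_1$, tracking chains of preimages, and cancelling non-zero-divisors in the domain --- does not lead anywhere and is in any case unnecessary. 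As written, the decisive step of (2) is missing, so this is a genuine gap.

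The missing observation is in fact a sentence you already wrote and then walked past. Take any $a\in\mathfrak{A}_0$ and, by surjectivity, choose $b$ with $\phi(b)=a$. Since $a\in\mathfrak{A}_0$ and $\mathfrak{A}_1=\mathfrak{A}_0$, you correctly note that $b\in\phi^{-1}(\mathfrak{A}_0)=\mathfrak{A}_0$; but $\mathfrak{A}_0=\ker\phi$, so $\phi(b)=0$, i.e.\ $a=0$. That single line finishes part (2), exactly as the paper does it, and it needs no cancellation and no hypothesis that $R$ is a domain (the domain hypothesis only guarantees that $\mathfrak{A}_0$, and hence every $\mathfrak{A}_n$, is a prime ideal, which is what the finite-dimensionality argument of Theorem \ref{2} uses later). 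So the gap is easily repaired, but as submitted the proof stops short of the conclusion and directs its remaining effort toward a cancellation argument that is not needed.
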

\begin{proof}
We prove the result by using induction on $n$. Since $R$ is a domain $\mathfrak{A}_0$ is a prime ideal. 
First we show that $\mathfrak{A}_0\subseteq \mathfrak{A}_1$. Let $r\in \mathfrak{A}_0$. Since $\mathfrak{A}_0=ker \phi$, 
$\phi(r)=0$. Clearly, $\phi(r)\in \mathfrak{A}_0$ and hence $r\in \phi^{-1}(\mathfrak{A}_0)$. Therefore 
$\mathfrak{A}_0\subseteq \mathfrak{A}_1$. Suppose $\mathfrak{A}_{n-1}\subseteq\mathfrak{A}_n$ for some $n\geq 1$. We prove that 
$\mathfrak{A}_n\subseteq\mathfrak{A}_{n+1}$. Let $s\in \mathfrak{A}_n$. Since $\mathfrak{A}_n=\phi^{-1}(\mathfrak{A}_{n-1})$,
$\phi(s)\in \mathfrak{A}_{n-1}$. As $\mathfrak{A}_{n-1}\subseteq\mathfrak{A}_n$, we have $\phi(s)\in \mathfrak{A}_n$.
Therefore $s\in \phi^{-1}(\mathfrak{A}_n)=\mathfrak{A}_{n+1}$. Hence $\mathfrak{A}_n\subseteq\mathfrak{A}_{n+1}$.

We show that if $\mathfrak{A}_n=\mathfrak{A}_{n+1}$ for some $n$, then $\mathfrak{A}_0=0$. As $\phi$
is a surjective ring homomorphism and $\mathfrak{A}_n=\mathfrak{A}_{n+1}$, we have $\mathfrak{A}_{n-1}=\mathfrak{A}_{n}$. By 
induction hypothesis, $\mathfrak{A}_0=\mathfrak{A}_1$. Now to show $\mathfrak{A}_0=0$. Let $r\in \mathfrak{A}_0$. 
Since $\mathfrak{A}_0=\mathfrak{A}_1$, we have $\mathfrak{A}_0=\phi^{-1}(\mathfrak{A}_0)$. 
Therefore $\phi^{-1}(r)=s\in \mathfrak{A}_0$. Hence $\phi(s)=r$ and as $s\in \mathfrak{A}_0$, $r=0$. Therefore $\mathfrak{A}_0=0$.
$\hfill\square$
\end{proof}

It is easy to see that the Noetherian rings are Hopfian. In the following theorem we extend this known class of Hopfian rings to include the domains which are finite dimensional non-Noetherian.

\begin{theorem}\label{2}
If $R$ is a finite dimensional commutative integral domain then $R$ is Hopfian. 
\end{theorem}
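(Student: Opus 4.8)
The plan is to leverage Proposition \ref{p1} directly. Let $\phi:R\to R$ be any surjective ring homomorphism; we must show $\phi$ is injective, i.e.\ that $\mathfrak{A}_0=\ker\phi=0$. By Proposition \ref{p1}(1), the ideals $\mathfrak{A}_n=\phi^{-n}(0)$ form an ascending chain of prime ideals (each $\mathfrak{A}_n$ is prime since it is the preimage under the ring map $\phi^n$ of the prime ideal $\mathfrak{A}_0$, the latter being prime because $R$ is a domain). By Proposition \ref{p1}(2), it suffices to show this chain stabilizes, i.e.\ $\mathfrak{A}_n=\mathfrak{A}_{n+1}$ for some $n$.

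The key point is that $\dim R = d < \infty$ bounds the length of any chain of \emph{distinct} prime ideals. First I would argue that the chain $\mathfrak{A}_0\subseteq\mathfrak{A}_1\subseteq\cdots$ cannot be strictly increasing forever: if all the inclusions $\mathfrak{A}_n\subsetneq\mathfrak{A}_{n+1}$ were strict, then $\mathfrak{A}_0\subsetneq\mathfrak{A}_1\subsetneq\cdots\subsetneq\mathfrak{A}_{d+1}$ would be a chain of $d+2$ distinct primes, contradicting $\dim R=d$. Hence $\mathfrak{A}_n=\mathfrak{A}_{n+1}$ for some $n\le d$. Then Proposition \ref{p1}(2) gives $\mathfrak{A}_0=0$, so $\phi$ is injective and therefore an automorphism. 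Since $\phi$ was arbitrary, $R$ is Hopfian.

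The only subtlety to pin down carefully is that each $\mathfrak{A}_n$ really is prime: since $\phi$ is surjective, $\phi^n:R\to R$ is surjective, so $R/\mathfrak{A}_n\cong R/\mathfrak{A}_0$, which is a domain because $\mathfrak{A}_0=\ker\phi$ and $R$ is a domain (alternatively, one checks $\mathfrak{A}_n=(\phi^n)^{-1}(\mathfrak{A}_0)$ by unwinding the recursive definition, and preimages of primes are prime). With that in hand there is no real obstacle; the finiteness of the Krull dimension is exactly the hypothesis that converts the ascending chain into a stabilizing one, which is all Proposition \ref{p1} needs. I do not expect any genuine difficulty here — the work was already done in Proposition \ref{p1}, and this theorem is essentially the observation that a finite-dimensional domain has no infinite strictly ascending chain of primes.
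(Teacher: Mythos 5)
Your proposal is correct and follows essentially the same route as the paper: apply Proposition \ref{p1} to the chain $\mathfrak{A}_0\subseteq\mathfrak{A}_1\subseteq\cdots$, use finite Krull dimension to force stabilization, and conclude $\ker\phi=0$. Your explicit justification that each $\mathfrak{A}_n$ is prime (as a preimage of the prime $\mathfrak{A}_0$ under the surjection $\phi^n$) is a detail the paper glosses over, but it is the same argument in substance.
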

\begin{proof}
Let $\phi$ be a surjective ring homomorphism from $R$ onto $R$. To show $\phi$ is an automorphism. Let $\mathfrak{A}_0=ker \phi$ 
and $\mathfrak{A}_n=\phi^{-1}(\mathfrak{A}_{n-1})$ for $n\geq 1$. By \ref{p1} $(1)$, 
$\mathfrak{A}_n$'s are prime ideals for all $n$.
Since $\phi$ is surjective ring homomorphism, we have increasing chain of prime ideals 
$\mathfrak{A}_0\subseteq\mathfrak{A}_1\subseteq\ldots \subseteq \mathfrak{A}_n\subseteq\ldots$ of $R$. Since $R$ is 
of finite dimensional, there exists a positive integer $n\geq 1$ such that $\mathfrak{A}_{n-1}=\mathfrak{A}_{n}$. By \ref{p1} $(2)$,
$\mathfrak{A}_0=0$, i.e. $Ker\phi =0$. Hence $\phi$ is an automorphism.
\end{proof}

\begin{remark}
 Note that the statement of the above theorem can be generalized to include the rings which are infinite dimensional but with no infinite strictly increasing ascending 
 chain of prime ideals. We have included a question at the end of this section in this regard.
\end{remark}

\begin{remark}
In above result, the hypothesis integral domain and finite dimension conditions are essential, as we have the following examples.
\end{remark}

\begin{example}
Let $A=K[x_1,x_2,\ldots]$ be the polynomial ring with infinitely many variables and let $I=(x_1^2,x_2^2,\ldots)$ be an ideal of $A$, where $K$ is a field. Let $R=A/I$ be 
a quotient ring. Then $dimR=0$. Let $\phi$ be a ring homomorphism from $R$ onto $R$ defined by 
$\phi(x_1)=0$ and $\phi(x_{i+1})=x_{i}$ for $i\geq 1$. 
Clearly, $\phi$ is an onto ring homomorphism from $R$ to $R$ but not an automorphism. Therefore $R$ is not Hopfian.
\end{example}

\begin{example}
Let $R=K[x_1,x_2,\ldots]$ be the polynomial ring with infinitely many variables, where $K$ is a field. Clearly $R$ is not finite dimensional. Define $\phi$ a ring homomorphism
from $R$ onto $R$ by $\phi(x_1)=0$ and $\phi(x_{i+1})=x_{i}$ for $i\geq 1$. Clearly $\phi$ is an onto ring homomorphism, but not injective, hence
not an automorphism. Therefore $R$ is not Hopfian.
\end{example}

We have the following example of finite dimensional commutative non-Noetherian Hopfian domain, which gives positive answer to the 
query of K. Varadarajan (see \cite{V1}). Note that it is easy to see that all the commutative Noetherian rings are Hopfian rings.

\begin{example}
By \ref{2}, Nagata's example of finite dimensional non Noetherian domain is Hopfian. 
\end{example}
\begin{proposition}
Let $R$ be a commutative Hopfian domain. Suppose $\phi: R[x]\rightarrow R[x]$ is surjective ring homomorphism.
If $\phi(R)\subseteq R$ then $\phi$ is an automorphism.
\end{proposition}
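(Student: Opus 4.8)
The plan is to use the hypothesis $\phi(R)\subseteq R$ to factor everything through $R$ and then invoke that $R$ is Hopfian. Write $\psi:=\phi|_{R}\colon R\to R$, a ring endomorphism of $R$ by hypothesis, and $g:=\phi(x)\in R[x]$, so that $\phi\!\left(\sum_i a_i x^i\right)=\sum_i \psi(a_i)\,g^{\,i}$ for every $\sum_i a_i x^i\in R[x]$; the core of the argument is a degree count for this right-hand side, valid because $R$ is a domain. First I would pin down $g$. If $\deg g=0$ then $g\in R$ and $\operatorname{im}\phi\subseteq R\subsetneq R[x]$, contradicting surjectivity, so $\deg g\geq 1$. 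Since $R$ has no zero divisors, $g^{\,i}$ has degree exactly $i\deg g$ with leading coefficient $(\operatorname{lead}g)^{i}\neq 0$, and therefore $\psi(a_i)g^{\,i}$ has degree exactly $i\deg g$ whenever $\psi(a_i)\neq 0$; as these degrees are pairwise distinct,
\[
\deg\phi(f)=(\deg g)\cdot\max\{\,i:\psi(a_i)\neq 0\,\}\qquad\text{whenever }\phi(f)\neq 0.
\]
Applying this to an $f$ with $\phi(f)=x$ forces $\deg g=1$; writing $g=bx+c$ and comparing leading coefficients in $\phi(f)=x$ gives $\psi(a_1)\,b=1$, so $b\in R^{\times}$.

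Next I would show $\psi$ is surjective: given $r\in R$, choose $f=\sum_i a_i x^i$ with $\phi(f)=r$; the case $r=0$ is trivial, and if $r\neq 0$ then $\deg\phi(f)=0$, so the displayed formula forces $\psi(a_i)=0$ for all $i\geq 1$ and hence $r=\phi(f)=\psi(a_0)\in\psi(R)$. Thus $\psi$ is a surjective endomorphism of the Hopfian domain $R$, hence an automorphism, so in particular $\psi$ is injective. Injectivity of $\phi$ now follows: the degree formula shows that $\phi(f)\neq 0$ as soon as some $\psi(a_i)\neq 0$, so $\phi(f)=0$ forces $\psi(a_i)=0$ for every $i$, and injectivity of $\psi$ gives $a_i=0$ for all $i$, i.e.\ $f=0$. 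Hence $\phi$ is a bijective endomorphism of $R[x]$, that is, an automorphism.

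I expect the only delicate point to be the degree count: one must invoke the domain hypothesis at each place where a leading coefficient could a priori vanish — both in the powers $g^{\,i}$ and in the scalings $\psi(a_i)g^{\,i}$ — and dispose of the degenerate possibilities $\deg g=0$ and $\phi(f)=0$ by hand. Once the degree formula and the normal form $g=bx+c$ with $b$ a unit are established, the surjectivity of $\psi$ and the injectivity of $\phi$ are immediate consequences of $R$ being Hopfian.
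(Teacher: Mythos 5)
Your proposal is correct and follows essentially the same route as the paper's proof: restrict $\phi$ to $R$, exploit the domain hypothesis through a degree count on $\phi(x)$ and its powers to show $\phi|_R$ is onto, and then invoke the Hopficity of $R$. The only divergence is the endgame: the paper concludes by showing $\phi(x)=b_0+b_1x$ with $b_1$ a unit and asserting this suffices, whereas you prove injectivity of $\phi$ directly from your degree formula together with the injectivity of $\phi|_R$, which is if anything a more self-contained way to finish.
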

\begin{proof}
First, we show that the restriction map $\phi$ from $R$ into $R$ is also surjective. Let $r\in R$ and 
suppose $f(x)\in R[x]$ such that $\phi(f(x))=r$. We show that $f(x)\in R$. 
Let $\phi(x)=b_0+b_1x+\cdots + b_mx^m$, where $m\geq 0$. Let $f(x)=a_0+a_1x+\cdots + a_nx^n$, where $n\geq 1$.
Since $\phi(f(x))=r$, we have $\phi(a_0)+\phi(a_1)\phi(x)+\cdots+\phi(a_n)\phi(x)^n=r$. Since $\phi$ is an onto ring homomorphism
from $R[x]$ to $R[x]$, $m\geq 1$. Since $R$ is a domain, $deg\phi(x)^n >deg \phi(x)^{n-1}$, 
and hence $\phi(a_i)=0$ for $1\leq i\leq n$. Therefore $\phi(a_0)=r$. Hence restriction of $\phi$ on $R$ is an onto ring 
homomorphism from $R$ onto $R$. Since $R$ is a Hopfian ring, the restriction of $\phi$ over $R$ is an automorphism from
$R$ to $R$. 

To show $\phi$ is an automorphism, it is enough to show $\phi(x)=ax+b$, where $a, b\in R$ such that $a$ is unit in $R$. 
Let $g(x)=c_0+c_1x+\cdots+c_dx^d\in R[x]$, where $d\geq 1$ such that $\phi(g(x))=x$. 
Therefore $\phi(c_0)+\phi(c_1)\phi(x)+\cdots+\phi(c_d)\phi(x)^d =x$. Since $R$ is domain, again we have $\phi(c_i)=0$ 
for $2\leq i\leq d$. Since $\phi(x)=b_0+b_1x+\cdots + b_mx^m$ and $\phi(c_0)+\phi(c_1)\phi(x)=x$, $b_i=0$ for $2\leq i\leq m$.
Since $\phi(c_0)+\phi(c_1)(b_0+b_1x)=x$, we have $b_1$ is unit and hence $\phi(x)=b_0+b_1x$. Therefore $\phi$ is an automorphism
from $R[x]$ to $R[x]$.
\end{proof}
\begin{corollary}
Let $R$ be a commutative Hopfian integral domain. Suppose $\phi: R[x]\rightarrow R[x]$ is surjective ring homomorphism.
If there exists an automorphism $\psi: R[x]\rightarrow R[x]$ such that $\psi\circ \phi(R)\subset R$ then $\phi$ is an automorphism. 
\end{corollary}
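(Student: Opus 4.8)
The plan is to reduce the statement to the previous proposition. We are given a surjective ring homomorphism $\phi: R[x]\rightarrow R[x]$ and an automorphism $\psi: R[x]\rightarrow R[x]$ with $\psi\circ\phi(R)\subseteq R$. The natural move is to consider the composite $\Phi:=\psi\circ\phi: R[x]\rightarrow R[x]$. Since $\psi$ is an automorphism (in particular surjective) and $\phi$ is surjective, $\Phi$ is a surjective ring homomorphism from $R[x]$ onto $R[x]$, and by hypothesis $\Phi(R)\subseteq R$. So $\Phi$ satisfies exactly the hypotheses of the preceding proposition.

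Applying that proposition to $\Phi$, I conclude that $\Phi=\psi\circ\phi$ is an automorphism of $R[x]$. Now I would like to deduce that $\phi$ itself is an automorphism. Since $\psi$ is an automorphism, it has an inverse $\psi^{-1}$, and then $\phi=\psi^{-1}\circ(\psi\circ\phi)=\psi^{-1}\circ\Phi$ is a composite of two automorphisms of $R[x]$, hence an automorphism. That completes the argument.

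I expect no serious obstacle here; the only point to be careful about is that composing a surjection with an automorphism (on either side) stays a surjection, and composing two automorphisms stays an automorphism — both routine. One might worry whether the previous proposition really only needs $\Phi(R)\subseteq R$ together with surjectivity of $\Phi$, but that is precisely its statement, so the corollary follows immediately. In writing it up I would simply set $\Phi=\psi\circ\phi$, verify it is onto with $\Phi(R)\subseteq R$, invoke the proposition to get that $\Phi$ is an automorphism, and then note $\phi=\psi^{-1}\circ\Phi$ is an automorphism.
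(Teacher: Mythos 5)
Your argument is correct and is exactly the intended route: the paper states this as an immediate corollary of the preceding proposition, applied to the surjective homomorphism $\psi\circ\phi$ (which maps $R$ into $R$), after which $\phi=\psi^{-1}\circ(\psi\circ\phi)$ is a composite of automorphisms. Nothing further is needed.
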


In the light of the theorem \ref{2} we would like to ask the following question.

\begin{question}
 Does there exist a Hopfian commutative integral domain that contains a strictly increasing infinite chain of prime ideals?
\end{question}

\section{The Hopficity of $R[x]$}
Our main goal here is to prove that for a commutative reduced local ring $R$, $R[x]$ is Hopfian if and only if $R$ is. We will prove which by showing the same result for a general
class of rings namely the clean rings. Note that this will also generalize Varadarajan's result(\cite{Varadrajan}, Theorem 2).
\begin{theorem}\label{varad_main}
Let $R$ be a boolean ring. If $R$ is Hopfian, then $R[x]$ is a Hopfian ring. 
\end{theorem}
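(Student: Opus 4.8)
The plan is to take a surjective ring homomorphism $\phi : R[x] \to R[x]$ with $R$ boolean and Hopfian, and show it is injective. The key structural feature of a boolean ring is that its only units are $1$ and (if the ring is nontrivial but not a field) possibly nothing else — in fact in a boolean ring the group of units is trivial, $U(R) = \{1\}$, since $u$ a unit with $u^2 = u$ forces $u = 1$. Moreover every element is idempotent, so $R$ is reduced and von Neumann regular of Krull dimension $0$. The first step is therefore to understand $\phi(x)$. Write $\phi(x) = f(x) = a_0 + a_1 x + \cdots + a_n x^n$. Because $R$ is boolean, each $a_i$ is idempotent, and one should analyze the ideal structure: for each minimal (= maximal) prime $\mathfrak{p}$ of $R$, the quotient $R/\mathfrak{p} \cong \mathbb{F}_2$, and $\phi$ induces a map on $(R/\mathfrak{p})[x] = \mathbb{F}_2[x]$ after a suitable base change. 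The point is to run a localization/Pierce-stalk argument: boolean rings are exactly the rings of global sections of sheaves of copies of $\mathbb{F}_2$ over a boolean space $X = \Spec R$, so $R[x]$ is the ring of global sections of the sheaf of $\mathbb{F}_2[x]$'s.

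Next I would reduce the surjectivity of $\phi$ to a statement about the restriction $\phi|_R$. The candidate approach mirrors the proposition proved above for domains: show first that $\phi(R) \subseteq R$, i.e. that constants go to constants. In the boolean case this should follow because $x$ must be in the image: if $g(x) = c_0 + \cdots + c_d x^d$ with $\phi(g) = x$, then working modulo each maximal ideal $\mathfrak{p}$ we get an equation in $\mathbb{F}_2[x]$, and since $\mathbb{F}_2[x]$ is a domain, degree considerations pin down $\deg \phi(x) \bmod \mathfrak{p}$ to be $1$ on a dense (clopen) set, hence (by compactness of $\Spec R$ and the fact that the leading behavior is locally constant) $\phi(x) = b_0 + b_1 x$ with $b_1$ a unit, so $b_1 = 1$. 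Once $\phi(x) = b_0 + x$ is affine with trivial linear part, $\phi$ restricts to an endomorphism $\phi_0$ of $R$, and surjectivity of $\phi$ forces surjectivity of $\phi_0$ (evaluate, or compare constant terms after substituting). Since $R$ is Hopfian, $\phi_0$ is an automorphism of $R$; composing $\phi$ with $(\phi_0^{-1} \text{ extended to } R[x])$ and with the substitution $x \mapsto x - \phi_0^{-1}(b_0)$ reduces to the case $\phi = \mathrm{id}$, so $\phi$ is an automorphism.

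The main obstacle I expect is the step controlling $\phi(x)$: a priori $\phi(x)$ could be a high-degree polynomial whose coefficients are idempotents that are neither $0$ nor $1$, so that modulo different maximal ideals $\phi(x)$ has different degrees, and one cannot immediately say $\deg \phi(x) = 1$. Handling this requires decomposing $\Spec R$ into the clopen pieces on which $\phi(x)$ has a fixed degree and a fixed leading coefficient pattern, and arguing that on each such piece the local version of the argument (essentially Varadarajan's $\mathbb{F}_2[x]$ case, or the domain proposition applied to $R/\mathfrak{p}[x]$) forces degree $1$; the requirement that $x$ lie in the image of $\phi$ is what kills the higher-degree pieces. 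Assembling these local conclusions into the global statement $\phi(x) = b_0 + x$ via the sheaf/Pierce description of boolean rings is the technical heart of the proof. Everything after that — deducing Hopficity of $\phi_0 : R \to R$ from that of $R$, and lifting $\phi_0^{-1}$ to an automorphism of $R[x]$ that trivializes $\phi$ — is routine.
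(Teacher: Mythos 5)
Your overall architecture --- reduce modulo the maximal ideals (Pierce stalks $\cong\mathbb{F}_2$) to pin down $\phi(x)$, then factor $\phi$ as a substitution $x\mapsto b_0+x$ composed with the coefficientwise extension of $\phi_0=\phi|_R$ and invoke Hopficity of $R$ --- can be made to work, and it is genuinely different from the paper's route, which obtains the boolean case as an immediate corollary of the reduced clean ring theorem \ref{theorem} via the decomposition $R[x]=R\oplus I$ and Lemmas \ref{lemma1}--\ref{proposition}. However, as written there is a real gap at the pivotal claim $\phi(R)\subseteq R$. You justify it by saying that once $\phi(x)=b_0+x$ is affine, ``$\phi$ restricts to an endomorphism $\phi_0$ of $R$''; that is a non sequitur, since the image of $x$ under a homomorphism of $R[x]$ carries no information about the images of constants. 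Worse, the mod-$\mathfrak{p}$ degree count you run first already presupposes $\phi(R)\subseteq R$: to read the identity $\sum_i \overline{\phi(c_i)}\,\overline{\phi(x)}^{\,i}=x$ in $(R/\mathfrak{p})[x]\cong\mathbb{F}_2[x]$ as a constraint on $\deg\overline{\phi(x)}$ you need the $\overline{\phi(c_i)}$ to be scalars; if $\phi(c_0)$ were allowed to be a nonconstant polynomial (say $\overline{\phi(c_0)}=x$ and $\overline{\phi(x)}=0$) the equation forces nothing. So the two steps, in the order you present them, are circular.

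The repair is short and must come first: every $r\in R$ satisfies $r^2=r$, so $\phi(r)$ is an idempotent of $R[x]$, and over a reduced ring an idempotent polynomial of degree $n\geq 1$ is impossible (compare leading coefficients in $\phi(r)^2=\phi(r)$); hence $\phi(R)\subseteq R$ unconditionally --- this is exactly the idempotent half of the paper's Lemma \ref{lemma1}. With that in hand your residue-field argument does show $\deg\bigl(\phi(x)\bmod\mathfrak{p}\bigr)=1$ for \emph{every} maximal ideal $\mathfrak{p}$, so no density, ``locally constant leading behavior,'' or compactness of $\Spec R$ is needed: writing $\phi(x)=b_0+b_1x+\cdots+b_mx^m$, each $b_j$ with $j\geq 2$ lies in every maximal ideal and hence is $0$ (the Jacobson radical of a boolean ring is zero), while $b_1$ lies in none, so $b_1$ is a unit and $b_1=1$. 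The remaining steps are fine: $\phi=\sigma_{b_0}\circ\tilde\phi_0$ with $\sigma_{b_0}\colon x\mapsto b_0+x$ an automorphism and $\tilde\phi_0$ the coefficientwise extension of $\phi_0$, so surjectivity of $\phi$ gives surjectivity of $\phi_0$ (compare constant terms), Hopficity of $R$ makes $\phi_0$ an automorphism, and $\phi$ is then a composite of automorphisms. With the order of the two steps swapped and the idempotent argument supplied, your proof is correct, and it is a more self-contained treatment of the boolean case than the paper's, at the cost of not yielding the clean-ring generalization.
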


 Throughout this section we assume that the ring $R$ is 
always commutative, reduced and with identity. 
The main theorem of this section is the following:

\begin{theorem}\label{main}
Let $R$ be a commutative reduced clean ring. If $R$ is Hopfian, then $R[x]$ is Hopfian ring. 
\end{theorem}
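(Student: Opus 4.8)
We are given a commutative reduced clean ring $R$ which is Hopfian, and a surjection $\phi\colon R[x]\twoheadrightarrow R[x]$; we want $\phi$ to be injective. The strategy is to reduce to the case handled by the previous proposition, namely where $\phi(R)\subseteq R$, by exploiting the idempotents that cleanness provides. First I would analyze the idempotents of $R[x]$: since $R$ is reduced, every idempotent of $R[x]$ lies in $R$, so $\phi$ restricts to a self-map of the Boolean algebra $B(R)$ of idempotents of $R$, and surjectivity of $\phi$ forces this restriction $\phi|_{B(R)}\colon B(R)\to B(R)$ to be surjective as well. A Boolean ring is clean, hence (by the earlier Theorem on boolean rings, or directly since $R$ Hopfian controls its idempotents) this map is a bijection — more carefully, I want to argue that $B(R)$ with the induced map is Hopfian because $R$ is, so $\phi$ permutes idempotents bijectively.

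Next, the key structural point: write an arbitrary $r\in R$ using cleanness as $r=u+e$ with $u\in R^\times$ and $e\in B(R)$. The image $\phi(r)=\phi(u)+\phi(e)$ where $\phi(u)$ is a unit of $R[x]$ and $\phi(e)$ is an idempotent, hence in $R$. A unit of $R[x]$ over a reduced ring is just a unit of $R$ (its higher coefficients are nilpotent, hence zero). Therefore $\phi(u)\in R$, so $\phi(r)\in R$ for every $r\in R$; that is, $\phi(R)\subseteq R$. Once this is established, the previous Proposition applies verbatim: the restriction $\phi|_R\colon R\to R$ is onto, hence an automorphism since $R$ is Hopfian, and then $\phi(x)=b_0+b_1x$ with $b_1\in R^\times$, so $\phi$ is an automorphism of $R[x]$. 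This gives the theorem, and the corollary about reduced local rings follows since a local ring is clean (every element is a unit or differs from a unit by the idempotent $1$... more precisely $r$ or $r-1$ is a unit), together with the fact that $R[x]$ Hopfian trivially implies $R$ Hopfian because $R$ is a retract of $R[x]$.

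The main obstacle I anticipate is the very first step: showing $\phi(u)$ is a unit of $R[x]$, and then that units of $R[x]$ lie in $R$. The second half is standard for reduced rings (nilpotent higher coefficients vanish), but one must be careful that $\phi(u)$ really is a unit — this is immediate since ring homomorphisms preserve units, so $\phi(u)\phi(u^{-1})=1$. The subtler issue is justifying that $\phi$ carries $R$ into $R$ \emph{coefficientwise-controllably} enough; the clean decomposition handles exactly this, turning an a priori polynomial image into a sum unit$+$idempotent, both of which we can force into $R$. A secondary technical point is confirming that reducedness of $R$ forces all idempotents of $R[x]$ into $R$ — this follows because an idempotent polynomial $f$ satisfies $f^2=f$, and comparing top coefficients in a reduced ring forces $\deg f=0$. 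I would also double-check that the hypothesis "reduced" is used in both places (units-in-$R[x]$ and idempotents-in-$R[x]$), which it is, so the proof is tight.
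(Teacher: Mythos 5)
Your first step coincides with the paper's Lemma \ref{lemma1}: writing $r=u+e$ with $u$ a unit and $e$ an idempotent, and using that over a reduced ring both the units and the idempotents of $R[x]$ lie in $R$, you get $\phi(R)\subseteq R$. That part is correct. (The excursion through the Boolean algebra of idempotents is unnecessary, and your claim that $\phi$ restricts to a surjection, let alone a bijection, of $B(R)$ is not justified --- a preimage of an idempotent need not be idempotent --- but nothing later depends on it.)

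The genuine gap is the phrase ``the previous Proposition applies verbatim.'' The proposition you invoke (surjections of $R[x]$ with $\phi(R)\subseteq R$) is stated and proved only for $R$ a commutative Hopfian \emph{domain}, and its proof depends essentially on additivity of degrees: from $\sum_i\phi(a_i)\phi(x)^i=r$ it concludes $\phi(a_i)=0$ for $i\geq 1$ because $\deg \phi(x)^i$ is strictly increasing. In a reduced clean ring that is not a domain this breaks down (in $K\times K$ one has $(1,0)x\cdot(0,1)x=0$), so neither the surjectivity of $\phi|_R$ nor the shape $\phi(x)=b_0+b_1x$ with $b_1$ a unit, nor the final injectivity of $\phi$, comes for free. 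This is exactly the content the paper supplies and you omit: decompose $R[x]=R\oplus I$ and note $\varphi_{21}=0$; prove that $\varphi_{22}(x^n)=\varphi_{22}(x)q(x)$ for every $n$ (Lemma \ref{lemma3}); use surjectivity to find $q(x)$ with $\varphi_{22}(q(x))=x$, cancel the non-zero-divisor $x$ to see that $a_1+a_2x+\cdots+a_nx^{n-1}$ is a unit of $R[x]$, and since units of $R[x]$ over a reduced ring are constants conclude $\varphi_{22}(x)=ux$ with $u$ a unit (Proposition \ref{proposition}); from this deduce that $\varphi_{11}\colon R\to R$ is a surjective ring homomorphism by comparing leading coefficients, which are units times $\varphi_{11}$ of coefficients; only then does Hopficity of $R$ give that $\varphi_{11}$ is an automorphism, and a final leading-coefficient argument shows $\ker f=0$. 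Until you prove these statements in the reduced clean (non-domain) setting, your argument establishes only $\phi(R)\subseteq R$, not the theorem.
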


\begin{definition}
An element $x \in R$ is called clean if it may be written as the
sum of a unit and an idempotent. If every element of $R$ is clean then we say that
$R$ is a clean ring.
\end{definition}
In proving \ref{main}, we follow the terminology and techniques developed by Varadarajan in (\cite{Varadrajan}). 
Let $R$ denote a commutative ring with identity and $I=\{p(x)\in R[x]|p(0)=0\}$. Then $R[x]=R\oplus I$. 
We denote any element of $R[x]$ as a
column vector 

\begin{equation*}
\begin{bmatrix}
           r \\
           p(x) \\
\end{bmatrix}, ~~\mbox{where}~~ r\in R ~~   \mbox{and}~~  p(x)\in I.
\end{equation*} 
Then the multiplication in $R[x]$ is given by the following formula:
 \begin{align}
\begin{bmatrix}
           r \\
           p(x) \\
\end{bmatrix}
\begin{bmatrix}
           s \\
           q(x) \\
\end{bmatrix}=
\begin{bmatrix}
           rs \\
           rq(x)+sp(x)+p(x)q(x)\\
\end{bmatrix}.
\end{align} 
 
Any $\mathbb{Z}$-homomorphism $f:R[x]\rightarrow R[x]$ can be represented by a matrix, i.e. 
\begin{equation*}
f=\begin{bmatrix}
           \varphi_{11}~~ \varphi_{12}\\
           \varphi_{21}~~ \varphi_{22} \\
\end{bmatrix},
\end{equation*} 
where $\varphi_{11}\in Hom_{\mathbb{Z}}(R,R)$, $\varphi_{12}\in Hom_{\mathbb{Z}}(I,R)$, $\varphi_{21}\in Hom_{\mathbb{Z}}(R,I)$ and
$\varphi_{22}\in Hom_{\mathbb{Z}}(I,I)$. For any $r\in R$ and $p(x)\in I$, we have 
\begin{equation*}f
\begin{bmatrix}
           r \\
           p(x) \\
\end{bmatrix} =
\begin{bmatrix}
           \varphi_{11}(r)+\varphi_{12}(p(x)) \\
            \varphi_{21}(r)+\varphi_{22}(p(x))\\
\end{bmatrix}.
\end{equation*} 

In view of this we have the following lemma.

\begin{lemma}\label{lemma1}
Let $R$ be a commutative reduced clean ring. If $f:R[x]\rightarrow R[x]$ is a ring homomorphism, then $\varphi_{21}=0$. 
\end{lemma}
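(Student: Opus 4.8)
The statement $\varphi_{21}=0$ says precisely that $f$ carries every constant polynomial into $R\subseteq R[x]$; that is, for each $r\in R$ the second coordinate of $f\!\begin{bmatrix} r\\ 0\end{bmatrix}$ in the decomposition $R[x]=R\oplus I$ vanishes. Since $\varphi_{21}$ is additive and $R$ is clean, the plan is to verify this only for $r$ a unit of $R$ and for $r$ an idempotent of $R$, and then to deduce the general case by writing an arbitrary element as a unit plus an idempotent.

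First I would record two elementary facts about $R[x]$ that use only that $R$ is reduced: every idempotent of $R[x]$ lies in $R$, and every unit of $R[x]$ lies in $R$. For idempotents, if $g(x)^2=g(x)$, then for an arbitrary prime $\mathfrak{p}$ of $R$ the image of $g$ in the integral domain $(R/\mathfrak{p})[x]$ is an idempotent, hence equals $0$ or $1$; so every coefficient of $g$ of positive degree lies in $\mathfrak{p}$. As this holds for all $\mathfrak{p}$, those coefficients lie in the nilradical of $R$, which is $0$ because $R$ is reduced, and therefore $g\in R$ (and $g$ is then an idempotent of $R$). For units, if $g(x)h(x)=1$, then comparing constant terms shows $g(0)\in R^{\times}$, while the image of $g$ in each $(R/\mathfrak{p})[x]$ is a unit of a domain, hence a nonzero constant; so again every positive-degree coefficient of $g$ lies in every prime, hence vanishes, and $g=g(0)\in R$.

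With these in hand the argument is short. A ring homomorphism sends idempotents to idempotents, so for every idempotent $e\in R$ we get that $f(e)$ is an idempotent of $R[x]$, hence $f(e)\in R$. For a unit $u\in R$ we have $f(u)f(u^{-1})=f(1)$; when $f$ is unital this exhibits $f(u)$ as a unit of $R[x]$, so $f(u)\in R$. (If one does not assume $f(1)=1$, then $f(1)$ is an idempotent of $R[x]$, hence an idempotent $e'\in R$, and $f$ maps into the reduced ring $e'R[x]=(e'R)[x]$ with identity $e'$; applying the two facts inside that ring gives $f(u)\in e'R\subseteq R$ and $f(e)\in R$ as well.) Finally, given any $r\in R$, write $r=u+e$ with $u$ a unit and $e$ an idempotent of $R$; then $\varphi_{21}(r)=\varphi_{21}(u)+\varphi_{21}(e)=0$, so $\varphi_{21}=0$.

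I do not expect a genuine obstacle. The only place requiring care is the pair of preliminary facts, and that is exactly where reducedness is essential: over a non-reduced base the conclusion fails (for instance $1+2x$ is a unit of $(\mathbb{Z}/4)[x]$ that is not a constant). Cleanness of $R$ enters only in the last step, to split a general element of $R$ as a unit plus an idempotent.
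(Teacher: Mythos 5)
Your proof is correct and follows essentially the same route as the paper's: decompose $r=u+e$ using cleanness, then use reducedness of $R$ to conclude that $f(u)$ and $f(e)$, being respectively a unit and an idempotent of $R[x]$, must already lie in $R$. The only variation is in how the two auxiliary facts are justified — the paper cites the Atiyah--MacDonald exercise for units and uses a degree comparison ($\deg f(e)^2=2\deg f(e)$) for idempotents, whereas you reduce modulo primes and use that the nilradical is zero — which is immaterial.
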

\begin{proof}
It is enough to show that $f(R)\subseteq R$. By contradiction, suppose for some $r\in R$, $f(r)= b_0+b_1x+b_2x^2+\cdots + b_nx^n$, 
where $b_i\in R$ for $0\leq i\leq n$ and $b_n\neq 0$ for some $n\geq 1$. Since $R$ is a clean ring $r=u+e$, where $u$ is a unit and $e$ is an 
idempotent in $R$. As $f$ is a ring homomorphism, $f(r)=f(u+e)=f(u)+f(e)$. Since $u\in R$ is a unit, $f(u)$ is a unit in $R[x]$.
Since $R$ is reduced, by (\cite{AT}, Ex. 2, p-10), we have $f(u)\in R$. Therefore the degree of $f(e)$ is $n\geq 1$. Since
$e\in R$ is an idempotent, $e^2=e$, we have $f(e^2)=f(e)$. Since $f$ is a ring homomorphism $f(e)^2=f(e)$. Since $R$ is 
a reduced, the degree of $f(e)^2$ is $2n$, where $n\geq 1$. Since $f(e)=f(e)^2$, a contradiction and hence 
the degree of $f(e)$ is zero. Therefore $f(e)\in R$.
Hence $f(r)=f(u)+f(e)\in R$. $\hfill\square$
\end{proof}
\begin{remark}
In view of the above lemma, $f$ has the following expression:

\begin{equation*}
f=\begin{bmatrix}
           \varphi_{11}~~ \varphi_{12}\\
           ~0~~~~ \varphi_{22} \\
\end{bmatrix}.
\end{equation*}
\end{remark}

\begin{lemma}\label{lemma3}
Let $R$ be a commutative reduced clean ring and 
\begin{equation*}
f=\begin{bmatrix}
           \varphi_{11}~~ \varphi_{12}\\
           ~0~~~~ \varphi_{22} \\
\end{bmatrix}:R[x]\rightarrow R[x]
\end{equation*} 
be a surjective ring homomorphism.
Then for every positive integer $n$, $\varphi_{22}(x^n)=\varphi_{22}(x)q(x)$, where $q(x)\in R[x]$. 
\end{lemma}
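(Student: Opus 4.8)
The plan is to exploit the multiplicativity of $f$ together with the direct sum decomposition $R[x] = R \oplus I$. Recall that for any $p(x) \in I$ the element $\varphi_{22}(p(x)) \in I$ is exactly the ``$I$-component'' of $f(p(x))$, that is, the part of $f(p(x))$ with vanishing constant term, while $\varphi_{12}(p(x))$ is the constant term of $f(p(x))$. Since $n \geq 1$ we have $x^n \in I$, so $\varphi_{22}(x^n)$ is meaningful and equals the no-constant-term part of $f(x^n)$.

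First I would split $f(x)$ into its constant term and its remainder: write $f(x) = c + h(x)$ with $c := \varphi_{12}(x) \in R$ and $h(x) := \varphi_{22}(x) \in I$. Then, using that $f$ is a ring homomorphism and that $R$ is commutative, the binomial theorem gives
\[
f(x^n) = f(x)^n = (c + h(x))^n = c^n + \sum_{k=1}^{n} \binom{n}{k} c^{\,n-k} h(x)^k = c^n + h(x)\,q(x),
\]
where $q(x) := \sum_{k=1}^{n}\binom{n}{k} c^{\,n-k} h(x)^{k-1} \in R[x]$. Since $h(x) \in I$ and $I$ is an ideal of $R[x]$, each summand $h(x)^k$ with $k \geq 1$ lies in $I$, so $c^n \in R$ is the constant term of $f(x^n)$ and $h(x)\,q(x) \in I$ is its $I$-component. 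Comparing $I$-components and recalling $h(x) = \varphi_{22}(x)$ yields $\varphi_{22}(x^n) = \varphi_{22}(x)\,q(x)$, which is the assertion.

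Since the proof reduces to a one-line binomial expansion, there is no genuine obstacle here. The only points that need care are: (i) the hypotheses ``reduced'' and ``clean'' enter only through Lemma \ref{lemma1}, which is what allows us to write $f$ in the triangular form with $\varphi_{21} = 0$ (equivalently $f(R) \subseteq R$); and (ii) one must correctly identify $\varphi_{22}(y)$ with the no-constant-term part of $f(y)$ for $y \in I$, so that the comparison of $I$-components in the last step is legitimate. Note that surjectivity of $f$ is not in fact used in this particular lemma, although it is presumably needed later.
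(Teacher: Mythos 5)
Your proof is correct, and it rests on the same two pillars as the paper's: the identification of $\varphi_{22}(y)$ with the $I$-component of $f(y)$ for $y\in I$ in the decomposition $R[x]=R\oplus I$, and the multiplicativity of $f$. The execution differs slightly: the paper argues by induction on $n$, applying the displayed multiplication formula for $R\oplus I$ to $x^{n}=x\cdot x^{n-1}$ to get
\[
\varphi_{22}(x^n)= \varphi_{12}(x)\varphi_{22}(x^{n-1})+\varphi_{12}(x^{n-1})\varphi_{22}(x)+\varphi_{22}(x)\varphi_{22}(x^{n-1}),
\]
and then substitutes the inductive hypothesis; you instead expand $f(x)^n=\bigl(\varphi_{12}(x)+\varphi_{22}(x)\bigr)^n$ by the binomial theorem and read off the $I$-component, which yields an explicit $q(x)=\sum_{k=1}^{n}\binom{n}{k}\varphi_{12}(x)^{\,n-k}\varphi_{22}(x)^{\,k-1}$ in one step. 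Your closing observations are accurate and in fact slightly sharper than what you claim: not only is surjectivity of $f$ unused here, but the reduced and clean hypotheses (and even the triangular form $\varphi_{21}=0$) play no role in this lemma, since $\varphi_{11}$ and $\varphi_{21}$ are only ever evaluated at $0$; these hypotheses matter only in Lemma \ref{lemma1} and in the later steps of the argument.
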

\begin{proof}
We prove by induction on $n$. It is trivial for $n=1$. Assume $n>1$. Suppose 
$\varphi_{22}(x^{n-1})= \varphi_{22}(x)q'(x)$ for some $q'(x)\in R[x]$. Now consider the following expression:
\begin{align*}
\begin{bmatrix}
           0\\
           x^{n} \\
\end{bmatrix}=
\begin{bmatrix}
           0 \\
           x \\
\end{bmatrix}
\begin{bmatrix}
           0 \\
           x^{n-1}\\
\end{bmatrix}.
\end{align*}
Applying $f$ (i.e. matrix expression of $f$) on both sides of the above expression, we get 
\begin{equation*}
 \varphi_{22}(x^n)= \varphi_{12}(x)\varphi_{22}(x^{n-1})+
\varphi_{12}(x^{n-1})\varphi_{22}(x)+\varphi_{22}(x)\varphi_{22}(x^{n-1}).
\end{equation*}
Substituting the value of $\varphi_{22}(x^{n-1})$ using the induction hypothesis, the result follows. $\hfill\square$
\end{proof}
\begin{proposition}\label{proposition}
Let $R$ be a commutative reduced clean ring and 
\begin{equation*}
f=\begin{bmatrix}
           \varphi_{11}~~ \varphi_{12}\\
           ~0~~~~ \varphi_{22} \\
\end{bmatrix}:R[x]\rightarrow R[x]
\end{equation*} 
be a surjective ring homomorphism.
Then $\varphi_{22}(x)=ux$, where $u\in R$ is a unit  and $\varphi_{11}:R\rightarrow R$ is a surjective ring homomorphism.
\end{proposition}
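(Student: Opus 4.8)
The plan is to first pin down $\varphi_{22}(x)$, and then to obtain surjectivity of $\varphi_{11}$ by recognizing $f$ as a composite of a polynomial automorphism of $R[x]$ with the coefficient-wise ring map determined by $\varphi_{11}$. To begin, I would record a semilinearity identity: applying $f$ to the product identity
\[
\begin{bmatrix} r \\ 0 \end{bmatrix}\begin{bmatrix} 0 \\ p(x) \end{bmatrix}=\begin{bmatrix} 0 \\ r\,p(x) \end{bmatrix}
\]
and comparing the two components via the multiplication formula, one reads off $\varphi_{22}(r\,p(x))=\varphi_{11}(r)\,\varphi_{22}(p(x))$ and $\varphi_{12}(r\,p(x))=\varphi_{11}(r)\,\varphi_{12}(p(x))$ for all $r\in R$ and $p(x)\in I$. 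Note also that $\varphi_{11}$, being the restriction of $f$ to $R$ (which lands in $R$ by Lemma~\ref{lemma1}), is a ring homomorphism $R\to R$.

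Next, to prove $\varphi_{22}(x)=ux$ with $u$ a unit: since $f$ is surjective, choose $g(x)=c_0+c_1x+\cdots+c_dx^d$ with $f(g(x))=x$. Splitting $g(x)=c_0+g_I$ with $g_I=c_1x+\cdots+c_dx^d\in I$ and comparing $I$-components of $f(g(x))=x$ gives $\varphi_{22}(g_I)=x$. By the semilinearity identity and Lemma~\ref{lemma3}, $\varphi_{22}(g_I)=\sum_{i=1}^{d}\varphi_{11}(c_i)\varphi_{22}(x^i)$ is a multiple of $\varphi_{22}(x)$ in $R[x]$, so $\varphi_{22}(x)\,w(x)=x$ for some $w(x)\in R[x]$, where $\varphi_{22}(x)\in I$ has zero constant term. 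Since $R$ is reduced it embeds into $\prod_{\mathfrak p}R/\mathfrak p$ over its minimal primes $\mathfrak p$, with each $R/\mathfrak p$ a domain; in $(R/\mathfrak p)[x]$ the image of $\varphi_{22}(x)$ is nonzero (otherwise the product would be $0\ne\bar x$) and has zero constant term, hence degree exactly $1$. Therefore every coefficient of $\varphi_{22}(x)$ beyond the linear term lies in $\bigcap_{\mathfrak p}\mathfrak p=0$, so $\varphi_{22}(x)=ux$ for some $u\in R$, and comparing coefficients of $x$ in $ux\,w(x)=x$ gives $u\,w(0)=1$, so $u$ is a unit.

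To conclude that $\varphi_{11}$ is onto: now $f(x)=\varphi_{12}(x)+\varphi_{22}(x)=ux+b$ where $b:=\varphi_{12}(x)\in R$. Let $\sigma\colon R[x]\to R[x]$ be the $R$-algebra homomorphism with $\sigma(x)=ux+b$; since $u$ is a unit, $\sigma$ is an automorphism, with inverse given by $x\mapsto u^{-1}(x-b)$. Let $\Phi\colon R[x]\to R[x]$ be the ring homomorphism with $\Phi|_R=\varphi_{11}$ and $\Phi(x)=x$; its image is exactly the set of polynomials all of whose coefficients lie in the subring $\varphi_{11}(R)$. A ring homomorphism out of $R[x]$ is determined by its values on $R$ and on $x$, and $\sigma\circ\Phi$ agrees with $f$ on $R$ (both act as $\varphi_{11}$) and on $x$ (both send it to $ux+b$); hence $f=\sigma\circ\Phi$. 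Since $f$ is surjective and $\sigma$ is bijective, $\Phi=\sigma^{-1}\circ f$ is surjective, which forces $\varphi_{11}(R)=R$ upon comparing constant terms. Thus $\varphi_{11}\colon R\to R$ is a surjective ring homomorphism, as claimed.

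The only genuinely non-formal step is the second paragraph above, namely deducing $\varphi_{22}(x)=ux$ from $\varphi_{22}(x)\,w(x)=x$; this is exactly where the hypothesis that $R$ is reduced is indispensable, being used through the embedding of $R$ into the product of the domains $R/\mathfrak p$ (without reducedness one could control $\varphi_{22}(x)$ only up to a nilpotent perturbation). Everything else is bookkeeping with the matrix description of $f$, the multiplication formula, and the invertibility of $u$.
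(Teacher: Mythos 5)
Your proof is correct, and it diverges from the paper's in an instructive way. For the first assertion your route is essentially the paper's: use surjectivity to find $g$ with $f(g)=x$, reduce via Lemma~\ref{lemma3} (plus the semilinearity $\varphi_{22}(rp)=\varphi_{11}(r)\varphi_{22}(p)$, which the paper uses tacitly but you rightly make explicit) to the relation $\varphi_{22}(x)w(x)=x$, and then kill the higher coefficients using reducedness; the only difference is that the paper cancels $x$ and quotes the Atiyah--MacDonald exercise that units of $R[x]$ over a reduced ring are constants, whereas you reprove the needed fact directly by passing to the domains $R/\mathfrak p$ over the minimal primes and comparing degrees --- the same mechanism, just unpackaged. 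For the surjectivity of $\varphi_{11}$ the approaches genuinely differ: the paper works inside the $I$-component, using that $\varphi_{22}$ is onto to pick a preimage of $rux$ and then comparing degrees and leading coefficients to extract $\varphi_{11}(a_1)=r$; you instead factor $f=\sigma\circ\Phi$, where $\sigma$ is the $R$-algebra automorphism $x\mapsto ux+b$ and $\Phi$ is the coefficient-wise map induced by $\varphi_{11}$, and deduce that $\Phi=\sigma^{-1}\circ f$ is onto, hence $\varphi_{11}(R)=R$ by looking at constant terms. Your factorization is more structural and buys something extra: it makes the subsequent Theorem~\ref{theorem} nearly immediate, since injectivity of $\varphi_{11}$ (from Hopficity of $R$) gives injectivity of $\Phi$ and hence of $f=\sigma\circ\Phi$, avoiding the separate kernel analysis in the paper; the paper's degree-comparison argument, on the other hand, stays entirely within the matrix bookkeeping it has already set up and needs no auxiliary homomorphisms.
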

\begin{proof}
Since $\varphi_{22}:I\rightarrow I$, suppose $\varphi_{22}(x)=a_1x+a_2x^2+\cdots+a_nx^n$, where $n\geq 1$.
Since $f$ is a surjective ring homomorphism,
it is easy to see that the $\mathbb{Z}$-homomorphism $\varphi_{22}:I\rightarrow I$ is onto. Hence there is a polynomial
$q(x)=b_1x+b_2x^2+\cdots+b_mx^m$, such that $\varphi_{22}(q(x))=x$. By \ref{lemma3}, $\varphi_{22}(q(x))=\varphi_{22}(x)q'(x)$, 
where $q'(x)\in R[x]$. Therefore $(a_1x+a_2x^2+\cdots+a_nx^n)q'(x)=x$. Hence $x((a_1+a_2x+\cdots a_nx^{n-1})q'(x)-1)=0$, i.e.
$(a_1+a_2x+\cdots a_nx^{n-1})q'(x)-1=0$. Therefore $a_1+a_2x+\cdots a_nx^{n-1}$ is unit in $R[x]$. Since $R$ is reduced and by
(\cite{AT}, Ex. 2, p-10), $a_1$ is a unit and $a_i=0$ for $2\leq i\leq n$, i.e. $\varphi_{22}(x)=a_1x$, where $a_1$ is a unit in $R$.

To show $\varphi_{11}:R\rightarrow R$ is a surjective ring homomorphism. In equation (1), by setting 
$p(x)=0=q(x)$ and applying $f$ on both sides, we get $\varphi_{11}(rs)=\varphi_{11}(r)\varphi_{11}(s)$ for all $r,s\in R$.
Now, to show $\varphi_{11}$ 
is onto. Let $r\in R$. By the above paragraph, there exists a unit $u \in R$ such that $\varphi_{22}(x)=ux$. 
Since the $\mathbb{Z}$-homomorphism $\varphi_{22}$ is surjective, there is a polynomial $p(x)\in I$ such that 
$\varphi_{22}(p(x))=rux$. Suppose $p(x)=a_1x+a_2x^2+\cdots+a_kx^k$, where $a_i\in R$ and $a_k\neq 0$ for some $k\geq 1$. Therefore, we get 
\begin{equation*}
\varphi_{11}(a_1)\varphi_{22}(x)+\varphi_{11}(a_2)\varphi_{22}(x^2)\cdots+\varphi_{11}(a_k)\varphi_{22}(x^k)=rux. 
\end{equation*}
Since $\varphi_{22}(x^j)$ is a polynomial of degree $j$ with leading coefficient a unit for $1\leq j\leq k$, we have 
$\varphi_{11}(a_j)
\varphi_{22}(x^j)=0$ for $2\leq j\leq k$. Therefore $\varphi_{11}(a_1)\varphi_{22}(x)=rux$. Since 
$\varphi_{22}(x)=ux$, $\varphi_{11}(a_1)ux=rux$. Hence $\varphi_{11}(a_1)=r$, i.e. $\varphi_{11}$ is onto. $\hfill\square$
\end{proof}
\begin{theorem}\label{theorem}
Let $R$ be a commutative reduced clean Hopfian ring. Then $R[x]$ is a Hopfian ring. 
\end{theorem}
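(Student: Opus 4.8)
The plan is to combine Lemma~\ref{lemma1}, Lemma~\ref{lemma3}, and Proposition~\ref{proposition} to reduce a surjective ring endomorphism of $R[x]$ to a surjective ring endomorphism of $R$, and then invoke the Hopficity of $R$. So suppose $f : R[x] \rightarrow R[x]$ is a surjective ring homomorphism. By Lemma~\ref{lemma1} we have $\varphi_{21}=0$, so $f$ has the upper-triangular matrix form recorded in the remark following that lemma. By Proposition~\ref{proposition}, $\varphi_{22}(x)=ux$ for a unit $u \in R$, and $\varphi_{11} : R \rightarrow R$ is a surjective ring homomorphism; since $R$ is Hopfian, $\varphi_{11}$ is an automorphism of $R$, in particular injective.

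Next I would show $f$ is injective. Take $g(x)=c_0+c_1x+\cdots+c_dx^d \in R[x]$ with $f(g(x))=0$. Writing $g(x)=c_0 + h(x)$ with $c_0 \in R$ and $h(x)\in I$, and using the matrix form, $f(g(x))$ has $R$-component $\varphi_{11}(c_0)$ and $I$-component $\varphi_{12}(h(x))+\varphi_{22}(h(x))$. From $\varphi_{11}(c_0)=0$ and injectivity of $\varphi_{11}$ we get $c_0=0$, so $g(x)=h(x)\in I$ and $\varphi_{22}(h(x))=0$ (the $\varphi_{12}$ term lands in $R$, but already $\varphi_{12}(h(x))+\varphi_{22}(h(x))=0$ forces both the $R$-part and the positive-degree part to vanish, hence $\varphi_{22}(h(x))=0$). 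Now $\varphi_{22}(x^j)$ is, by Lemma~\ref{lemma3} together with $\varphi_{22}(x)=ux$, a polynomial of degree exactly $j$ with unit leading coefficient (here one uses that $R$ is reduced, so leading coefficients multiply without collapsing and degrees add). Writing $h(x)=\sum_{j\ge 1} c_j x^j$, the equation $\sum_j \varphi_{11}(c_j)\varphi_{22}(x^j)=0$ can be read off degree by degree from the top: the top degree term forces $\varphi_{11}(c_{\deg h})\cdot(\text{unit})=0$, hence $c_{\deg h}=0$ by injectivity of $\varphi_{11}$, and descending induction kills all $c_j$. Thus $g(x)=0$ and $f$ is injective, hence an automorphism, so $R[x]$ is Hopfian.

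The step I expect to require the most care is the degree bookkeeping for $\varphi_{22}(x^j)$: one must confirm that each $\varphi_{22}(x^j)$ genuinely has degree $j$ with a unit as leading coefficient, which relies on $\varphi_{22}(x)=ux$ (Proposition~\ref{proposition}) and the reducedness of $R$ to rule out degree drop when multiplying polynomials—this is exactly the property used inside Proposition~\ref{proposition} via the reference (\cite{AT}, Ex.~2, p.~10) characterizing units of $R[x]$ over a reduced ring. Everything else is a routine matrix-multiplication unwinding of the ring-homomorphism condition and an application of the hypothesis that $R$ is Hopfian. I would also remark that this theorem is exactly \ref{main} restated with the adjective ``Hopfian'' moved into the hypothesis list, and that the promised corollary—for $R$ reduced local, $R$ Hopfian $\iff$ $R[x]$ Hopfian—follows since local rings are clean and since $R$ is a quotient of $R[x]$, making $R$ Hopfian whenever $R[x]$ is.
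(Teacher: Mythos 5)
Your proposal follows essentially the same route as the paper: Lemma \ref{lemma1} to get the triangular form, Proposition \ref{proposition} to get $\varphi_{22}(x)=ux$ with $u$ a unit and $\varphi_{11}$ surjective, Hopficity of $R$ to make $\varphi_{11}$ an automorphism, and then a leading-coefficient argument to show the kernel is trivial; your formulation that each $\varphi_{22}(x^j)$ has degree exactly $j$ with unit leading coefficient is precisely what is needed (and is in fact more accurate than the paper's assertion $\varphi_{22}(x^j)=u_jx^j$, which can fail when $\varphi_{12}(x)\neq 0$). One bookkeeping slip: since $\varphi_{12}$ maps $I$ into $R$, the $R$-component of $f(g)$ is $\varphi_{11}(c_0)+\varphi_{12}(h(x))$ and the $I$-component is just $\varphi_{22}(h(x))$, not the grouping you wrote, so you cannot conclude $c_0=0$ at the outset. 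The repair is only a reordering of steps you already have: from the $I$-component, $\varphi_{22}(h(x))=0$, and your degree argument gives $h(x)=0$; then $\varphi_{12}(h(x))=0$, hence $\varphi_{11}(c_0)=0$ and injectivity of $\varphi_{11}$ gives $c_0=0$. Finally, the degree-$j$/unit-leading-coefficient claim is most cleanly justified by induction from the product identity displayed in the proof of Lemma \ref{lemma3} together with $\varphi_{22}(x)=ux$; reducedness is not actually needed at that particular step once the leading coefficients are units (it was already spent in Lemma \ref{lemma1} and Proposition \ref{proposition}).
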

\begin{proof}
Let $f:R[x]\rightarrow R[x]$ be any surjective ring homomorphism. Write $R[x]=R\oplus I$. By \ref{lemma1}, $f$ has the following expression:  
\begin{equation*}
f=\begin{bmatrix}
           \varphi_{11}~~ \varphi_{12}\\
           ~0~~~~ \varphi_{22} \\
\end{bmatrix},
\end{equation*}
where $\varphi_{11}\in Hom_{\mathbb{Z}}(R,R)$, $\varphi_{12}\in Hom_{\mathbb{Z}}(I,R)$ and
$\varphi_{22}\in Hom_{\mathbb{Z}}(I,I)$. By \ref{proposition}, $\varphi_{11}:R\rightarrow R$ is a surjective ring homomorphism and 
$\varphi_{22}(x)=ux$, where $u\in R$ is a unit. Since $R$ is Hopfian, it follows that $\varphi_{11}:R\rightarrow R$ 
is an automorphism.
\begin{align*}
\mbox{Let}
\begin{bmatrix}
           r \\
           p(x) \\
\end{bmatrix}\in \mbox {Ker} f. ~~~\mbox{Then} ~~\varphi_{22}(p(x))=0.~~\mbox{Hence, it is enough to show that}~~ p(x)=0.
\end{align*}
Suppose $p(x)\neq 0$, then $p(x)= a_1x+a_2x^2+\cdots +a_nx^n$ with $a_i\in R$ and $a_n\neq 0$ for some $n\geq 1$. We know that
$\varphi_{22}(x^j)=u_jx^j$ for $1\leq j\leq n$, where $u_j\in R$ is unit for all $1\leq j\leq n$. Therefore $\varphi_{22}(p(x))$
is a polynomial with leading coefficient $\varphi_{11}(a_n)u_n$. Since $\varphi_{11}:R\rightarrow R$ is an isomorphism, we 
see that $\varphi_{11}(a_n)\neq 0$ and hence $\varphi_{11}(a_n)u_n\neq 0$. Thus $\varphi_{22}(p(x))=0$ implies that $p(x)=0$. Also
since $\varphi_{11}$ is an isomorphism, $\varphi_{11}(r)=0$ implies $r=0$. Hence $f:R[x]\rightarrow R[x]$ is an isomorphism. 
$\hfill\square$
\end{proof}

Note that a boolean ring is a clean ring which is reduced. As a result we derive \ref{varad_main} as a corollary.
The following result is well known, we give the proof for sake of completeness. 
\begin{lemma}\label{lemma4}
If $R$ is a commutative local ring, then $R$ is clean ring. 
\end{lemma}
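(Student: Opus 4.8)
The plan is to show directly that every element of a commutative local ring $R$ is the sum of a unit and an idempotent. Let $(R,\mathfrak{m})$ be local with maximal ideal $\mathfrak{m}$, and take any $a\in R$. The key observation is that in a local ring, an element lies outside $\mathfrak{m}$ precisely when it is a unit, since $\mathfrak{m}$ is the unique maximal ideal and hence contains every non-unit. I would split into two cases according to whether $a$ is a unit.

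First, if $a$ is a unit, then writing $a = a + 0$ exhibits $a$ as the sum of the unit $a$ and the idempotent $0$, so $a$ is clean. Second, if $a$ is not a unit, then $a\in\mathfrak{m}$, and I claim $a-1$ is a unit: if $a-1$ were a non-unit it would lie in $\mathfrak{m}$, and then $a-(a-1)=1$ would lie in $\mathfrak{m}$, which is absurd. Hence $a-1$ is a unit, and the identity $a = (a-1) + 1$ writes $a$ as the sum of the unit $a-1$ and the idempotent $1$. In either case $a$ is clean, so $R$ is a clean ring.

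There is essentially no obstacle here: the only ingredient is the elementary fact that in a local ring the set of non-units coincides with the unique maximal ideal, which is standard. The proof is a two-line case analysis, and the lemma is recorded mainly so that it may be combined with Theorem \ref{main} (equivalently Theorem \ref{theorem}) to deduce the stated corollary: if $R$ is a commutative reduced local ring, then $R$ is Hopfian if and only if $R[x]$ is Hopfian. Indeed, a local ring is clean by this lemma, so the ``only if'' direction follows from Theorem \ref{theorem}; the ``if'' direction is immediate since any surjective endomorphism of $R$ extends to one of $R[x]$ fixing $x$, and a Hopfian $R[x]$ then forces it to be injective.
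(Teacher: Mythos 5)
Your proof is correct and is essentially the same as the paper's: the same two-case decomposition, writing a unit $a$ as $a+0$ and a non-unit $a\in\mathfrak{m}$ as $(a-1)+1$, using that non-units in a local ring lie in the unique maximal ideal. Nothing further is needed.
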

\begin{proof}
Let $x\in R$. Let $m$ be a maximal ideal. If $x\notin m$, then $x = x + 0$, where $x$ is a unit and $0$ is an idempotent.
If $x\in m$, then $x = -1+x + 1$, where $-1+x$ is a unit and $1$ is an idempotent in $R$.  $\hfill\square$
\end{proof}
\begin{remark}\label{rem}
It is easy to see that if $R[x]$ is Hopfian, then $R$ is Hopfian. 
\end{remark}

\begin{theorem}\label{main2}
Let $R$ be a commutative reduced local ring, $R$ is Hopfian if and only if the polynomial ring $R[x]$ is Hopfian.
\end{theorem}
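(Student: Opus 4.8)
The plan is to deduce Theorem \ref{main2} directly from the machinery already assembled in this section, together with Lemma \ref{lemma4} and Remark \ref{rem}. The key observation is that a commutative local ring is clean by Lemma \ref{lemma4}, so a commutative reduced local ring is in particular a commutative reduced clean ring, and hence Theorem \ref{theorem} applies to it verbatim.

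The proof splits into the two implications. For the forward direction, I would argue: suppose $R$ is Hopfian. Since $R$ is a commutative reduced local ring, Lemma \ref{lemma4} gives that $R$ is clean, so $R$ is a commutative reduced clean Hopfian ring. By Theorem \ref{theorem}, $R[x]$ is Hopfian. For the converse, suppose $R[x]$ is Hopfian; then $R$ is Hopfian immediately by Remark \ref{rem} (which records that Hopficity of $R[x]$ descends to $R$, e.g. by composing a surjection $R \to R$ with the inclusion $R \hookrightarrow R[x]$ and the evaluation-at-zero retraction $R[x] \to R$, or simply because $R$ is a retract of $R[x]$). This completes the equivalence.

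Since every step is an appeal to a previously established statement, there is essentially no obstacle here; the only thing to be careful about is making sure the hypotheses line up — namely that ``reduced local'' does indeed entail ``reduced clean,'' which is exactly the content of Lemma \ref{lemma4}. One could also remark that the converse does not even use the local or reduced hypotheses, as Remark \ref{rem} holds for arbitrary commutative rings, so the substantive content is entirely in the forward direction and is inherited from Theorem \ref{theorem}.

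\begin{proof}
Suppose first that $R$ is Hopfian. Since $R$ is a commutative local ring, by Lemma \ref{lemma4} it is a clean ring. Thus $R$ is a commutative reduced clean Hopfian ring, and by Theorem \ref{theorem} the polynomial ring $R[x]$ is Hopfian.

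Conversely, suppose $R[x]$ is Hopfian. By Remark \ref{rem}, $R$ is Hopfian. $\hfill\square$
\end{proof}
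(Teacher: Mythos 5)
Your proposal is correct and follows exactly the paper's own argument: the paper proves Theorem \ref{main2} by citing \ref{theorem}, \ref{lemma4} and \ref{rem}, which is precisely your forward direction (local $\Rightarrow$ clean, then apply \ref{theorem}) and your converse (via \ref{rem}). Your added justification of the retraction in Remark \ref{rem} is a harmless elaboration of the same route.
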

\begin{proof}
By \ref{theorem}, \ref{lemma4} and \ref{rem}, the proof follows. 
\end{proof}

\bibliographystyle{amsplain}
{}
\end{document}